\documentclass[11pt,reqno]{amsart}

\usepackage{amssymb,amsmath,amsfonts,amsthm}
\usepackage{microtype}
\usepackage[colorlinks=true,linkcolor=blue,citecolor=blue,urlcolor=blue]{hyperref}

\newtheorem{theorem}{Theorem}[section]
\newtheorem{lemma}[theorem]{Lemma}

\newtheorem{corollary}[theorem]{Corollary}
\theoremstyle{definition}
\newtheorem{definition}[theorem]{Definition}
\theoremstyle{remark}

\numberwithin{equation}{section}

\newcommand{\R}{\mathbb R}
\newcommand{\cD}{\mathcal D}
\newcommand{\cE}{\mathcal E}

\title[Volume growth and recurrence of fractional powers]
{Volume Growth and Recurrence of Fractional Powers of the
Laplace--Beltrami Operator}

\author{Haoxuan Cheng}
\address{School of Mathematical Sciences, Fudan University,
Shanghai 200433, China}
\email{hxcheng25@m.fudan.edu.cn}
\date{}

\subjclass[2020]{31C25, 60J76, 47D07, 58J35}
\keywords{fractional Laplacian, subordination, recurrence,
Dirichlet form, volume growth, weighted extension}

\begin{document}
\raggedbottom

\begin{abstract}
Let \(M\) be a connected geodesically complete Riemannian manifold without
boundary, write \(\mu\) for Riemannian volume, and set
\(V(o,r)=\mu(B(o,r))\) for geodesic balls centered at \(o\). For
\(0<\alpha<2\), let \(X^{(\alpha)}\) be the process obtained
by subordinating Brownian motion with an independent
\(\alpha/2\)-stable subordinator; its \(L^2\)-generator is
\(-(-\Delta)^{\alpha/2}\). We prove that
\[
  \int^\infty\frac{dt}{V(o,t^{1/\alpha})}=\infty
\]
implies that \(X^{(\alpha)}\) is recurrent. The proof uses a spectral trace
estimate and radial cutoffs on
\(M\times(0,\infty)\), where the auxiliary measure is
\(y^{1-\alpha}\,d\mu\,dy\). It proves the sufficient implication in
Grigor'yan's Problem~26.
\end{abstract}

\maketitle
\enlargethispage{3pt}

\section{Introduction}
\label{sec:introduction}

Let \((M,g)\) be a connected geodesically complete Riemannian manifold
without boundary.  We denote its Riemannian volume measure by \(\mu\), fix a
point \(o\in M\), and write
\[
 B(o,r)=\{x\in M:d_g(o,x)<r\},
 \qquad V(o,r)=\mu(B(o,r)).
\]
Here \(d_g\) is the geodesic distance.  Let \(A=-\Delta\) be the
nonnegative Laplace--Beltrami operator on \(L^2(M,\mu)\).  Given
\(0<\alpha<2\), the fractional operator in this paper is
\(A^{\alpha/2}=(-\Delta)^{\alpha/2}\), defined by spectral calculus.  Its
negative generates the symmetric Markov process obtained by evaluating
Brownian motion at an independent \(\alpha/2\)-stable random time.  This is
the \(\alpha\)-process in Grigor'yan's terminology.  The operator, the random
time change, and the associated Dirichlet form are defined in
Section~\ref{sec:preliminaries}.

The problem is to decide recurrence from the single radial volume function
\(V(o,r)\).  In \cite[Problem~26]{Grigoryan1999}, Grigor'yan asked whether
the implication
\begin{equation}
 \int^\infty \frac{dt}{V(o,t^{1/\alpha})}=\infty
 \quad\Longrightarrow\quad
 \text{the \(\alpha\)-process is recurrent}
 \label{eq:original-condition}
\end{equation}
holds on every geodesically complete manifold, without a relative
Faber--Krahn inequality.  A relative Faber--Krahn inequality gives a uniform
lower bound for the first Dirichlet spectral value of every subdomain of a
ball in terms of the subdomain's relative volume.  It is therefore a uniform
local spectral hypothesis that supplies information beyond the volumes of the
concentric balls \(B(o,r)\).

Two results in \cite{Grigoryan1999} provide the relevant background.
Theorem~16.2 proves recurrence under the pointwise bound
\(V(o,r)\leq Cr^\alpha\) for all sufficiently large \(r\).  Theorem~16.3
proves that the integral in \eqref{eq:original-condition} is necessary and
sufficient when a relative Faber--Krahn inequality is available.
Grigor'yan returned to the question in his 2005 lecture notes, published in
\cite{Grigoryan2006}.  Immediately after the corresponding
Corollary~9.17, he again stated that the sufficient implication without a
relative Faber--Krahn inequality was not known, except when \(\alpha=2\).

McGillivray proved volume-growth criteria for subordinated strongly local
Dirichlet forms under a strong radial-symmetry condition
\cite{McGillivray1995,McGillivray1997}.  The explicit derivative criterion in
\cite{McGillivray1995} assumes
\(1<\alpha<2\) and
\(\dot v(r)\leq Cr^{\alpha-1-\varepsilon}\) for some
\(0<\varepsilon\leq\alpha-1\), where \(v\) is the radial volume function in
that criterion.  Problem~26 concerns arbitrary complete manifolds and also
allows critical logarithmic volume growth through
\eqref{eq:original-condition}.

Other recurrence criteria for subordinated and jump processes use auxiliary
rate functions, capacity estimates, or quantitative bounds on the jump
kernel.  {\^O}kura developed rate-function criteria and structural results for
subordinated symmetric Markov processes
\cite{Okura2002}; related capacitary estimates appear in
\cite{Okura2003}.  Kaneko treated recurrence and transience of symmetric Hunt
processes \cite{Kaneko2000}.  Masamune--Uemura--Wang assume a uniform
truncated second-moment bound
\cite[Theorem~1.2]{MasamuneUemuraWang2012}, and {\^O}kura--Uemura impose
volume and jump-kernel hypotheses on classes of pure-jump forms
\cite{OkuraUemura2015}.  These criteria use data beyond the one-point volume
function in \eqref{eq:original-condition}.

Fractional powers admit weighted extension formulations; see Stinga--Torrea
\cite{StingaTorrea2010} and the
Dirichlet-space treatment of Baudoin--Lang--Sire
\cite{BaudoinLangSire2020}.  For local diffusion forms, Sturm derived
volume-capacity estimates and recurrence criteria from radial cutoffs
\cite{Sturm1995}.  We prove the required trace estimate directly and construct
radial cutoffs on the weighted product space.  Together, these steps convert
the integral in \eqref{eq:original-condition} into vanishing fractional
energy.

For the statement below, put \(s=\alpha/2\) and let
\[
 \cE_s(f,f)=\|A^{s/2}f\|_2^2,
 \qquad \cD(\cE_s)=\cD(A^{s/2}).
\]
This is the quadratic form associated with the \(\alpha\)-process.  We call
the process recurrent when there are \(u_j\in\cD(\cE_s)\) such that
\(0\leq u_j\leq1\), \(u_j\to1\) almost everywhere, and
\(\cE_s(u_j,u_j)\to0\).  Section~\ref{subsec:subordination} verifies the
association with the process and records the standard Dirichlet-form
criterion.  The following theorem answers Problem~26.

\begin{theorem}
\label{thm:main}
Let \((M,g)\) be a connected geodesically complete Riemannian manifold
without boundary, let \(o\in M\), let \(0<\alpha<2\), and put
\(s=\alpha/2\).  If
\[
 \int^\infty \frac{dt}{V(o,t^{1/\alpha})}=\infty,
\]
then the \(\alpha\)-process is recurrent.  Equivalently, its associated
quadratic form \(\cE_s\) admits the zero-energy approximating sequence just
described.
\end{theorem}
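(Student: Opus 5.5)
The plan is to use the weighted extension picture on \(M\times(0,\infty)\), where the extension variable has weight \(y^{1-2s}=y^{1-\alpha}\). The fractional energy of a function \(u\) on \(M\) is controlled by the weighted Dirichlet energy of any extension \(U\) with \(U(\cdot,0)=u\). Radial cutoffs on the product space have small weighted energy exactly when the volume integral in Theorem~\ref{thm:main} diverges. Restricting those cutoffs to \(y=0\) then produces the zero-energy sequence.

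\emph{Step 1 (trace estimate).} I will prove that there is \(C_s<\infty\) such that, for every \(U\) with \(U,\nabla_xU,\partial_yU\) locally square integrable with respect to \(y^{1-\alpha}d\mu\,dy\) and with trace \(u=U(\cdot,0)\in L^2(M,\mu)\),
\[
 \cE_s(u,u)\le C_s\int_0^\infty\!\!\int_M\bigl(|\nabla_xU|^2+|\partial_yU|^2\bigr)\,y^{1-\alpha}\,d\mu\,dy .
\]
I will do this spectrally rather than with any PDE machinery, since nothing beyond completeness is available. Let \(E\) be the spectral resolution of \(A\). Completeness gives \(\int_M|\nabla_xU(\cdot,y)|^2d\mu=\int\lambda\,d\langle E_\lambda U(y),U(y)\rangle\), so the weighted energy equals
\[
 \int_{[0,\infty)}\int_0^\infty\bigl(|\partial_y\hat U_\lambda|^2+\lambda|\hat U_\lambda|^2\bigr)y^{1-2s}\,dy
\]
in the spectral representation. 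The needed one-dimensional inequality is
\[
 \lambda^{s}|g(0)|^2\le C_s\int_0^\infty\bigl(|g'|^2+\lambda|g|^2\bigr)y^{1-2s}\,dy .
\]
By scaling \(y\mapsto y/\sqrt\lambda\) it reduces to \(\lambda=1\). There it follows from \(|g(0)|\le|g(y)|+\int_0^y|g'|\), Cauchy--Schwarz with the weight \(y^{1-2s}\), which is integrable against \(y^{2s-1}\) near \(0\) because \(s<1\), and averaging over \(y\in(0,1)\). Integrating this inequality against the spectral measure gives the trace estimate, and it also shows \(u\in\cD(A^{s/2})\). The main technical care is in making the spectral disintegration of \(U(y)\) rigorous for \(U\) Lipschitz with bounded support; I will do it for such \(U\) only, which is all that is needed.

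\emph{Step 2 (cutoffs on the product).} Put \(\rho(x,y)=\max(d_g(o,x),y)\). Then \(|\nabla\rho|\le1\) almost everywhere, and
\[
 m(r):=\int_{\{\rho<r\}}y^{1-\alpha}\,d\mu\,dy\le\frac{r^{2-\alpha}}{2-\alpha}V(o,r).
\]
With \(r_k=2^k\), I will take \(U=\varphi(\rho)\), where \(\varphi\) equals \(1\) on \([0,r_{k_0}]\), equals \(0\) beyond \(r_N\), and is linear on each \([r_k,r_{k+1}]\) with drop \(c_k\ge0\), \(\sum_k c_k=1\). Its energy is at most \(\sum_k c_k^2\,m(r_{k+1})/r_k^2\). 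Optimizing the \(c_k\) by Cauchy--Schwarz gives energy at most \(\bigl(\sum_{k_0}^{N}r_k^2/m(r_{k+1})\bigr)^{-1}\).

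By monotonicity of \(m\), \(\sum_k r_k^2/m(r_{k+1})\ge c\int_{r_{k_0+1}}^{r_{N+1}}r\,dr/m(r)\), and \(r/m(r)\ge(2-\alpha)\,r^{\alpha-1}/V(o,r)\). The substitution \(t=r^\alpha\) turns \(\int^\infty r^{\alpha-1}dr/V(o,r)\) into \(\alpha^{-1}\int^\infty dt/V(o,t^{1/\alpha})=\infty\). Hence for each fixed \(k_0\) the energy tends to \(0\) as \(N\to\infty\).

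\emph{Step 3 (conclusion).} Set \(u_j(x)=\varphi_j(d_g(o,x))\), where \(k_0=j\) and \(N=N_j\) is chosen so that the energy bound is below \(1/j\). Each \(u_j\) is Lipschitz with compact support, since closed balls are compact by completeness. It satisfies \(0\le u_j\le1\) and \(u_j\to1\) everywhere. By Step~1, \(\cE_s(u_j,u_j)\le C_s/j\to0\). This is the zero-energy sequence in Theorem~\ref{thm:main}, and the recurrence criterion of Section~\ref{subsec:subordination} then gives recurrence of the \(\alpha\)-process.

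I expect Step~1 to be the main obstacle: the trace estimate must hold on an arbitrary complete manifold with a constant depending only on \(s\). Steps 2 and 3 are routine once it is in place.
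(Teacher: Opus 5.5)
Your proposal is correct and follows essentially the same route as the paper: a spectral trace inequality obtained by scaling a one-dimensional weighted estimate, followed by dyadic radial cutoffs on $M\times(0,\infty)$ whose drops are optimized by Cauchy--Schwarz, so that the energy is the reciprocal of a sum that diverges by the volume hypothesis. The differences are technical (the paper avoids pointwise spectral disintegration by applying a Hilbert-valued version of the one-dimensional inequality on each dyadic band $E((2^j,2^{j+1}])$ with $\lambda=2^j$, and it uses $\rho=\sqrt{d_g(o,x)^2+y^2}$ instead of your $\max(d_g(o,x),y)$), and in your unit-scale step you should average against $y^{1-2s}\,dy$, or over an interval away from $0$ such as $[1,2]$, because for $s<1/2$ the quantity $\int_0^1|g|^2\,dy$ is not dominated by $\int_0^1|g|^2y^{1-2s}\,dy$.
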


The proof is variational.  Put \(s=\alpha/2\).  For an auxiliary function
\(U:M\times(0,\infty)\to\mathbb R\), we first prove that the fractional
energy of its boundary trace \(U(\cdot,0)\) is bounded by
\[
 \int_0^\infty y^{1-2s}
 \bigl(\|\partial_yU(\cdot,y)\|_2^2
       +\|A^{1/2}U(\cdot,y)\|_2^2\bigr)\,dy.
\]
This weighted quantity is the local Dirichlet energy of \(U\) on the product
space with measure \(y^{1-2s}\,d\mu\,dy\).  We construct radial cutoffs on
that weighted product, using successive annuli whose radii double.  On each
annulus, the drop of the cutoff is chosen in proportion to the reciprocal of
the annulus's quadratic energy cost.  The volume integral in
\eqref{eq:original-condition} makes the sum of these reciprocal costs
diverge.  The boundary traces then converge to \(1\), and their fractional
energies tend to zero.  The Dirichlet-form recurrence criterion completes
the proof.

Section~\ref{sec:preliminaries} fixes the analytic and probabilistic
framework.  Section~\ref{sec:trace} proves the trace inequality.
Section~\ref{sec:cutoffs} constructs and estimates the weighted cutoffs, and
Section~\ref{sec:recurrence} proves Theorem~\ref{thm:main} and records its
model consequences.
\section{Analytic and probabilistic framework}
\label{sec:preliminaries}

\subsection{The Laplacian and its fractional Dirichlet form}
\label{subsec:laplacian}

Throughout the paper, \((M,g)\), \(d_g\), \(\mu\), \(o\), \(B(o,r)\),
and \(V(o,r)\) have the meanings fixed in the introduction.  By the
Hopf--Rinow theorem, every closed bounded ball in \(M\) is compact.  In
particular, \(V(o,r)<\infty\) for finite \(r\).

We work with the real Hilbert space
\[
 H=L^2(M,\mu;\mathbb R),
 \qquad
 \langle f,h\rangle_2=\int_Mfh\,d\mu,
 \qquad
 \|f\|_2=\langle f,f\rangle_2^{1/2}.
\]
Its complexification is used when applying the spectral theorem.  Let
\(\nabla\) be the Riemannian gradient and use the convention
\(\Delta=\operatorname{div}\nabla\).  The operator
\[
 A=-\Delta\geq0
\]
is the nonnegative self-adjoint realization associated with the closure of
\begin{equation}
 \cE_0(f,h)=\int_M\langle\nabla f,\nabla h\rangle_g\,d\mu,
 \qquad f,h\in C_c^\infty(M).
 \label{eq:laplace-form}
\end{equation}
Here \(C_c^\infty(M)\) denotes the smooth compactly supported functions.
On a complete manifold this realization agrees with the closure of the
Laplace--Beltrami operator initially defined on \(C_c^\infty(M)\); see
\cite[Section~2.2]{Grigoryan1999}.

Let \(W^{1,2}(M)\) be the space of \(L^2\)-functions whose weak gradient is
in \(L^2\).  Completeness gives
\begin{equation}
 \cD(A^{1/2})=W^{1,2}(M),
 \qquad
 \|A^{1/2}u\|_2^2=\int_M|\nabla u|_g^2\,d\mu.
 \label{eq:laplace-form-domain}
\end{equation}
The following density argument is used below.  Choose a Lipschitz function
\(\theta:[0,\infty)\to[0,1]\) equal to \(1\) on \([0,1]\), equal to
\(0\) on \([2,\infty)\), and satisfying \(|\theta'|\leq2\) almost
everywhere.  Then
\(\chi_R(x)=\theta(d_g(o,x)/R)\) has compact support and
\(|\nabla\chi_R|\leq2/R\) almost everywhere.  If
\(u\in W^{1,2}(M)\), dominated convergence and
\(\|u\nabla\chi_R\|_2\leq2R^{-1}\|u\|_2\) show that
\(\chi_Ru\to u\) in \(W^{1,2}(M)\).  A partition of unity and local
mollification then approximate each compactly supported Sobolev function by
members of \(C_c^\infty(M)\).  Thus \(C_c^\infty(M)\) is a form core for
\eqref{eq:laplace-form}: it is dense in \(W^{1,2}(M)\) for the norm
\((\|u\|_2^2+\|A^{1/2}u\|_2^2)^{1/2}\).

Let \(E\) be the projection-valued spectral measure of \(A\).  For a Borel
set \(I\subset[0,\infty)\), \(E(I)=\mathbf1_I(A)\) is the corresponding
orthogonal projection.  If \(\gamma>0\), then
\[
 A^\gamma f=\int_{[0,\infty)}\lambda^\gamma\,dE(\lambda)f,
\]
with domain
\begin{equation}
 \begin{aligned}
 \cD(A^\gamma)
 &=\left\{f\in L^2(M,\mu;\mathbb C):
   \int_{[0,\infty)}\lambda^{2\gamma}\,d\mu_f(\lambda)<\infty
   \right\},\\
 \mu_f(I)&=\|E(I)f\|_2^2.
 \end{aligned}
 \label{eq:spectral-power-domain}
\end{equation}
The measure \(\mu_f\) is the scalar spectral measure of \(f\).  Spectral
projections and powers preserve the real subspace.

Fix \(0<\alpha<2\) and put
\begin{equation}
 s=\frac{\alpha}{2}\in(0,1).
 \label{eq:s-alpha}
\end{equation}
The closed quadratic form of \(A^s=(-\Delta)^{\alpha/2}\) is
\begin{equation}
 \cE_s(f,h)=\langle A^{s/2}f,A^{s/2}h\rangle_2,
 \qquad
 \cD(\cE_s)=\cD(A^{s/2})\cap H.
 \label{eq:fractional-form}
\end{equation}
In particular,
\begin{equation}
 \cE_s(f,f)=\int_{[0,\infty)}\lambda^s\,d\mu_f(\lambda).
 \label{eq:fractional-spectral-energy}
\end{equation}
The form domain \(\cD(A^{s/2})\) is larger than the operator
domain \(\cD(A^s)\).

\subsection{The subordinated process and the recurrence criterion}
\label{subsec:subordination}

The proof from Section~\ref{sec:trace} onward is formulated in terms of
quadratic forms.  This subsection identifies \(\cE_s\) as the form of the
\(\alpha\)-process and states the zero-energy approximation criterion used in
Section~\ref{sec:recurrence}.

Let \((B_t)_{t\geq0}\) be Brownian motion on \(M\), with generator
\(\Delta\) and lifetime \(\zeta\).  Thus our normalization runs at twice
the speed of the convention with generator \(\frac12\Delta\).  Under the
latter convention, stable subordination gives the generator
\(-2^{-s}A^s\); the deterministic time change \(t\mapsto2^st\) gives
\(-A^s\).  Such a positive constant rescaling of time does not affect
recurrence.  For a
bounded Borel function \(f\), its transition operator is
\[
 \mathsf P_t f(x)=\mathbb E_x[f(B_t);t<\zeta],
\]
where \(\mathbb E_x\) denotes expectation for Brownian motion started from
\(x\).  The induced strongly continuous contraction semigroup on \(H\) is
\(e^{-tA}\).

An \(s\)-stable subordinator \((S_t)_{t\geq0}\) is a nondecreasing
right-continuous process with stationary independent increments whose
Laplace transform is
\begin{equation}
 \mathbb E(e^{-\lambda S_t})=e^{-t\lambda^s},
 \qquad \lambda,t>0.
 \label{eq:stable-clock}
\end{equation}
Assume that \(S\) and \(B\) are independent, and let \(\eta_t\) be the law
of \(S_t\).  The \(\alpha\)-process is
\(X_t^{(\alpha)}=B_{S_t}\), with the convention that \(B\) stays at the cemetery
state after its lifetime.  Its pointwise transition operator is
\begin{equation}
 \mathsf P_t^{(s)}f(x)
 =\mathbb E_x[f(B_{S_t});S_t<\zeta]
 =\int_{(0,\infty)}\mathsf P_r f(x)\,\eta_t(dr).
 \label{eq:pointwise-subordination}
\end{equation}
On \(H\), the spectral theorem and \eqref{eq:stable-clock} give
\begin{equation}
 T_t^{(s)}=\int_{(0,\infty)}e^{-rA}\,\eta_t(dr)=e^{-tA^s}.
 \label{eq:l2-subordination}
\end{equation}
Hence the \(L^2\)-generator is \(-A^s=-(-\Delta)^{\alpha/2}\), and its
closed form is precisely \eqref{eq:fractional-form}.  Each operator
\(e^{-rA}\) is Markovian, and \(\eta_t\) is a probability measure.  Thus
\(T_t^{(s)}\) is a symmetric Markov semigroup, so
\((\cE_s,\cD(\cE_s))\) is a symmetric Dirichlet form.  Formula
\eqref{eq:l2-subordination} is called stable, or Bochner, subordination: it
averages the original semigroup \(e^{-rA}\) over the probability law
\(\eta_t\) of the random clock.  See \cite{Okura2002,FOT2011}.

A symmetric Dirichlet form on \(H\) is a densely defined,
closed, nonnegative symmetric bilinear form \((\cE,\cD(\cE))\) satisfying
the Markov property
\[
 u\in\cD(\cE)
 \quad\Longrightarrow\quad
 (0\vee u)\wedge1\in\cD(\cE),
 \qquad
 \cE((0\vee u)\wedge1,(0\vee u)\wedge1)\leq\cE(u,u).
\]
The form is \emph{regular} if
\(\cD(\cE)\cap C_c(M)\) is dense in \(\cD(\cE)\) for the norm
\((\|u\|_2^2+\cE(u,u))^{1/2}\), and dense in \(C_0(M)\) for the uniform
norm.  Here \(C_c(M)\) denotes the continuous compactly supported
functions, and \(C_0(M)\) the continuous functions vanishing at infinity.

The fractional form \((\cE_s,\cD(\cE_s))\) is regular.  If
\(f\in\cD(A^{s/2})\), the spectral truncations
\(E([0,R])f\) belong to \(\cD(A^{1/2})\) and converge to \(f\) in the
graph norm
\((\|f\|_2^2+\|A^{s/2}f\|_2^2)^{1/2}\) of \(A^{s/2}\).  The density statement following
\eqref{eq:laplace-form-domain}, together with
\(\lambda^s\leq1+\lambda\), then shows that \(C_c^\infty(M)\) is a form
core for \(\cE_s\).  Its uniform density in \(C_0(M)\) proves regularity.

Because \(M\) is connected, its minimal heat kernel is strictly positive
for every positive time; see \cite[Section~2.2]{Grigoryan1999}.  Since
\[
 \mathbb P(S_t=0)=\lim_{\lambda\to\infty}
 \mathbb E(e^{-\lambda S_t})
 =\lim_{\lambda\to\infty}e^{-t\lambda^s}=0,
\]
formula
\eqref{eq:pointwise-subordination} shows that \(T_t^{(s)}\) maps every
nonzero nonnegative function to a positive function.  Thus the form has no
nontrivial invariant measurable subset.  The Hunt process associated with
this regular form is the subordinated process \(X^{(\alpha)}\) constructed
above.  Consequently, recurrence of the form is recurrence of the process in
Problem~26.

We use the following standard characterization of recurrence for regular
symmetric Dirichlet forms.

\begin{definition}
\label{def:recurrence}
The form \((\cE_s,\cD(\cE_s))\) is recurrent if there are functions
\(u_j\in\cD(\cE_s)\) such that
\begin{equation}
 0\leq u_j\leq1,
 \qquad u_j\longrightarrow1\quad\mu\text{-a.e.},
 \qquad \cE_s(u_j,u_j)\longrightarrow0.
 \label{eq:recurrence-criterion}
\end{equation}
\end{definition}

Condition \eqref{eq:recurrence-criterion} is equivalent to the usual
Dirichlet-form definition of recurrence by
\cite[Theorem~1.6.3]{FOT2011}.  The phrase ``the \(\alpha\)-process is
recurrent'' will always mean recurrence of its associated form in this
sense.

To put the volume hypothesis into the scale used by the cutoff construction,
make the substitution \(t=r^\alpha\).  It gives
\begin{equation}
 \int_T^\infty\frac{dt}{V(o,t^{1/\alpha})}
 =\alpha\int_{T^{1/\alpha}}^\infty
 \frac{r^{\alpha-1}}{V(o,r)}\,dr.
 \label{eq:change-of-variables}
\end{equation}
Since \(\alpha=2s\), the hypothesis of Theorem~\ref{thm:main} is
equivalent to
\begin{equation}
 \int^\infty\frac{r^{2s-1}}{V(o,r)}\,dr=\infty.
 \label{eq:radial-condition}
\end{equation}
\section{A weighted trace estimate}
\label{sec:trace}

We first establish the trace estimate that transfers weighted energy on an
auxiliary half-line to the fractional energy of the boundary value.

We begin by fixing the meaning of integration and differentiation for a
Hilbert-space-valued map.  Let \(H\) be a real or complex Hilbert space.  A
map \(G:(a,b)\to H\) is \emph{strongly measurable} if it is almost
everywhere the pointwise limit of measurable simple \(H\)-valued maps.  It is
\emph{Bochner integrable} if it is strongly measurable and
\(\int_a^b\|G(y)\|_H\,dy<\infty\).  Its Bochner integral is the vector in
\(H\) characterized by
\[
 \left\langle\int_a^bG(y)\,dy,h\right\rangle_H
 =\int_a^b\langle G(y),h\rangle_H\,dy,
 \qquad h\in H.
\]
A map \(F:(0,\infty)\to H\) is locally absolutely continuous if, on every
compact interval \([a,b]\subset(0,\infty)\), there is a Bochner-integrable
map \(G\) such that
\[
 F(y)=F(a)+\int_a^yG(z)\,dz,
 \qquad a\leq y\leq b.
\]
In that case the norm derivative
\(F'(y)=\lim_{h\to0}(F(y+h)-F(y))/h\) exists for almost every \(y\), equals
\(G(y)\), and satisfies the Hilbert-valued fundamental theorem of calculus.
Derivatives understood in this sense will be called Bochner derivatives.

\begin{lemma}
\label{lem:hilbert-trace}
Let \(H\) be a real or complex Hilbert space. Suppose that
\(F:(0,\infty)\to H\) is locally absolutely continuous and has a strong
limit \(F(0)=\lim_{y\downarrow0}F(y)\). If \(\lambda>0\) and
\[
 \int_0^\infty y^{1-2s}
 \bigl(\|F'(y)\|_H^2+\lambda\|F(y)\|_H^2\bigr)\,dy<\infty,
\]
then
\begin{equation}
 \lambda^s\|F(0)\|_H^2
 \leq C_s\int_0^\infty y^{1-2s}
 \bigl(\|F'(y)\|_H^2+\lambda\|F(y)\|_H^2\bigr)\,dy.
 \label{eq:hilbert-scaled}
\end{equation}
\end{lemma}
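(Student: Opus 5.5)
First reduce to \(\lambda=1\) by scaling, then prove the \(\lambda=1\) case by a one-dimensional trace bound along the half-line. For the scaling, given \(F\) and \(\lambda>0\), put \(G(t)=F(t\lambda^{-1/2})\). Then \(G'(t)=\lambda^{-1/2}F'(t\lambda^{-1/2})\). Substituting \(y=t\lambda^{-1/2}\), \(dy=\lambda^{-1/2}dt\), \(y^{1-2s}=\lambda^{-(1-2s)/2}t^{1-2s}\) gives
\[
 \int_0^\infty y^{1-2s}\bigl(\|F'\|^2+\lambda\|F\|^2\bigr)\,dy
 =\lambda^{-(1-2s)/2}\lambda^{-1/2}\lambda\int_0^\infty t^{1-2s}\bigl(\|G'\|^2+\|G\|^2\bigr)\,dt
 =\lambda^{s}\int_0^\infty t^{1-2s}\bigl(\|G'\|^2+\|G\|^2\bigr)\,dt .
\]
Since \(G(0)=F(0)\), \eqref{eq:hilbert-scaled} follows from the case \(\lambda=1\) with the same constant \(C_s\). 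Local absolute continuity and the existence of the strong limit at \(0\) are both preserved under this dilation.

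For \(\lambda=1\), the plan is to write \(F(0)\) in terms of \(F\) on a unit interval. Fix a cutoff \(\phi\in C^1([0,\infty))\) with \(\phi(0)=1\) and \(\phi=0\) on \([1,\infty)\). For \(0<\varepsilon<1\), apply the Hilbert-valued fundamental theorem of calculus to \(\phi F\) on \([\varepsilon,1]\); this is legitimate because \(\phi F\) is locally absolutely continuous:
\[
 \phi(\varepsilon)F(\varepsilon)=-\int_\varepsilon^1\bigl(\phi'(y)F(y)+\phi(y)F'(y)\bigr)\,dy .
\]
Take norms and apply Cauchy--Schwarz with weight \(y^{1-2s}\):
\[
 \|\phi(\varepsilon)F(\varepsilon)\|\le \Bigl(\int_0^1 y^{2s-1}\bigl(|\phi'|+|\phi|\bigr)^2dy\Bigr)^{1/2}
 \Bigl(\int_0^1 y^{1-2s}\bigl(\|F\|+\|F'\|\bigr)^2dy\Bigr)^{1/2}.
\]
The first factor is finite exactly because \(2s-1>-1\), that is, \(s>0\). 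The second factor is at most \(\sqrt2\) times the square root of the weighted energy. Letting \(\varepsilon\downarrow0\) and using the strong limit \(F(0)\) together with \(\phi(\varepsilon)\to1\) gives \(\|F(0)\|^2\le C_s\int_0^\infty y^{1-2s}(\|F'\|^2+\|F\|^2)\,dy\), where \(C_s=2\int_0^1y^{2s-1}(|\phi'|+|\phi|)^2dy\). For example, \(\phi(y)=1-y\) on \([0,1]\) gives an explicit constant.

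I expect the only delicate points to be the following.
\begin{itemize}
\item The fundamental theorem of calculus has to be applied on \([\varepsilon,1]\) rather than at \(0\) itself. This is why the argument passes to the limit \(\varepsilon\downarrow0\) through the assumed strong limit.
\item The integrability of \(y^{2s-1}\) near \(0\) must be checked; it holds because \(s>0\).
\end{itemize}
The condition \(s<1\) is not needed for this inequality. The finiteness hypothesis on the weighted energy is used only to make the right-hand side meaningful.
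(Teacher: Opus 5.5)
Your proof is correct and follows essentially the paper's route: reduce to $\lambda=1$ by the dilation $G(t)=F(t/\sqrt\lambda)$, then bound $F(\varepsilon)$ at unit scale by the Bochner fundamental theorem of calculus and a weighted Cauchy--Schwarz inequality, which needs only $\int_0^1 y^{2s-1}\,dy<\infty$, and let $\varepsilon\downarrow0$. The one variation is in handling the far endpoint: the paper averages over $[1,2]$ to pick a point $r$ where $\|F(r)\|^2$ is controlled by the weighted integral of $\|F\|^2$, while your cutoff $\phi$ with $\phi(1)=0$ removes that endpoint term, and your identity is exactly the paper's pointwise identity averaged over $r$ against $-\phi'(r)\,dr$.
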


\begin{proof}
At unit scale, choose \(r\in[1,2]\) such that
\[
 \|F(r)\|_H^2\leq C_s\int_1^2z^{1-2s}\|F(z)\|_H^2\,dz.
\]
This follows from averaging over \([1,2]\), because the weight
\(z^{1-2s}\) is comparable to \(1\) there.
For \(0<\varepsilon<r\), the Bochner fundamental theorem of calculus gives
\[
 \|F(\varepsilon)\|_H
 \leq\|F(r)\|_H+\int_\varepsilon^r\|F'(z)\|_H\,dz.
\]
The derivative term has the explicit bound
\begin{align*}
 \left(\int_\varepsilon^r\|F'(z)\|_H\,dz\right)^2
 &\leq
 \left(\int_\varepsilon^r z^{1-2s}\|F'(z)\|_H^2\,dz\right)
 \left(\int_\varepsilon^r z^{2s-1}\,dz\right)\\
 &\leq \frac{2^{2s}}{2s}
 \int_0^2z^{1-2s}\|F'(z)\|_H^2\,dz.
\end{align*}
After squaring the preceding triangle inequality and absorbing the two
terms into a constant depending only on \(s\), we obtain
\[
 \|F(\varepsilon)\|_H^2
 \leq C_s\int_0^\infty z^{1-2s}
 \bigl(\|F'(z)\|_H^2+\|F(z)\|_H^2\bigr)\,dz.
\]
The assumed strong limit at zero means convergence in the norm of \(H\), so
letting \(\varepsilon\downarrow0\) proves the unit-scale estimate.

For the general scale, set \(G(z)=F(z/\sqrt\lambda)\).  Then
\(G'(z)=\lambda^{-1/2}F'(z/\sqrt\lambda)\) almost everywhere, and the change
of variables \(z=\sqrt\lambda y\) gives
\begin{align*}
 \int_0^\infty z^{1-2s}\|G'(z)\|_H^2\,dz
 &=\lambda^{-s}\int_0^\infty
   y^{1-2s}\|F'(y)\|_H^2\,dy,\\
 \int_0^\infty z^{1-2s}\|G(z)\|_H^2\,dz
 &=\lambda^{1-s}\int_0^\infty
   y^{1-2s}\|F(y)\|_H^2\,dy.
\end{align*}
Multiplying the unit-scale estimate for \(G\) by \(\lambda^s\) proves
\eqref{eq:hilbert-scaled}.
\end{proof}

Let \(A\) be a nonnegative self-adjoint operator on \(H\).  For a
path \(U:(0,\infty)\to H\) with the regularity specified in the next lemma,
we introduce the notation
\begin{equation}
 \mathfrak E_s(U)=\int_0^\infty y^{1-2s}
 \bigl(\|U'(y)\|_H^2+\|A^{1/2}U(y)\|_H^2\bigr)\,dy.
 \label{eq:extension-energy}
\end{equation}
We call \(\mathfrak E_s(U)\) the \emph{extension energy}.  The weight
\(y^{1-2s}\) is the standard
fractional-extension weight; compare
\cite{StingaTorrea2010,BaudoinLangSire2020}.  When \(H=L^2(M,\mu)\) and
\(A=-\Delta\), formula \eqref{eq:laplace-form-domain} turns
\eqref{eq:extension-energy} into the local weighted Dirichlet energy on
\(M\times(0,\infty)\); this identification is written explicitly in
\eqref{eq:geometric-extension-energy} below.

\begin{lemma}
\label{lem:spectral-trace}
Let \(A\geq0\) be self-adjoint on a real or complex Hilbert space \(H\).
Suppose that \(U:(0,\infty)\to H\) is locally absolutely continuous, has a
strong limit \(U(0)\) at zero, and satisfies the following conditions:
\begin{enumerate}
 \item \(U(y)\in\cD(A^{1/2})\) for almost every \(y>0\);
 \item the map \(y\mapsto A^{1/2}U(y)\), defined arbitrarily on the
 exceptional null set, is strongly measurable;
 \item the extension energy \(\mathfrak E_s(U)\) defined in
 \eqref{eq:extension-energy} is finite.
\end{enumerate}
Then \(U(0)\in\cD(A^{s/2})\) and
\begin{equation}
 \|A^{s/2}U(0)\|_H^2\leq C_s\mathfrak E_s(U).
 \label{eq:spectral-trace}
\end{equation}
\end{lemma}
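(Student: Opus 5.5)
The plan is to reduce Lemma~\ref{lem:spectral-trace} to the scalar estimate of Lemma~\ref{lem:hilbert-trace} by decomposing \(H\) spectrally. I would cut the spectrum into dyadic pieces \(I_k=[2^k,2^{k+1})\), \(k\in\mathbb Z\), plus the atom \(\{0\}\). Write \(P_k=E(I_k)\) and \(F_k(y)=P_kU(y)\). Each \(P_k\) is a bounded operator commuting with \(A\). Hence \(F_k\) is locally absolutely continuous, with Bochner derivative \(F_k'=P_kU'\), and \(F_k(y)\to P_kU(0)\) strongly as \(y\downarrow0\). On \(\operatorname{ran}P_k\) we have \(\|A^{1/2}h\|^2\geq 2^k\|h\|^2\), so
\[
 \int_0^\infty y^{1-2s}\bigl(\|F_k'\|^2+2^k\|F_k\|^2\bigr)\,dy
 \leq\int_0^\infty y^{1-2s}\bigl(\|P_kU'\|^2+\|A^{1/2}P_kU\|^2\bigr)\,dy=:\mathfrak E_s^{(k)}.
\]
This quantity is finite by hypothesis~(3).

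Next I apply Lemma~\ref{lem:hilbert-trace} to \(F_k\) with \(\lambda=2^k\), which gives \(2^{ks}\|P_kU(0)\|^2\leq C_s\mathfrak E_s^{(k)}\). Since \(\lambda^s\leq 2^s\,2^{ks}\) on \(I_k\),
\[
 \int_{I_k}\lambda^s\,d\mu_{U(0)}(\lambda)\leq 2^sC_s\,\mathfrak E_s^{(k)}.
\]
The atom at \(0\) contributes nothing to \(\int\lambda^s\,d\mu_{U(0)}\). I then sum over \(k\). By orthogonality of the \(P_k\), for a.e.\ \(y\),
\[
 \sum_k\|P_kU'(y)\|^2\leq\|U'(y)\|^2,\qquad \sum_k\|A^{1/2}P_kU(y)\|^2=\|A^{1/2}U(y)\|^2,
\]
the second using hypothesis~(1). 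Monotone convergence lets me interchange the sum with the \(y\)-integral, so \(\sum_k\mathfrak E_s^{(k)}\leq\mathfrak E_s(U)\). Therefore \(\int\lambda^s\,d\mu_{U(0)}<\infty\). By \eqref{eq:spectral-power-domain} this means \(U(0)\in\cD(A^{s/2})\), and \eqref{eq:spectral-trace} follows with constant \(2^sC_s\).

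The main obstacle is measure-theoretic bookkeeping, not the estimate itself. I must check that \(y\mapsto A^{1/2}P_kU(y)=P_kA^{1/2}U(y)\) is strongly measurable; this follows from hypothesis~(2) because \(P_k\) is bounded. I must also check that the integrand in \(\mathfrak E_s^{(k)}\) is measurable so that Tonelli applies. Finally, \(A^{1/2}\) is bounded on \(\operatorname{ran}P_k\), which is what makes \(F_k\) admissible even though \(A^{1/2}U\) need not be continuous in \(y\). All of this is routine once the commutation \(P_kA^{1/2}\subset A^{1/2}P_k\) from the spectral theorem is invoked.
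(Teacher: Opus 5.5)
Your proposal is correct and follows the paper's proof essentially step for step. Both arguments split the spectrum into dyadic bands, apply Lemma~\ref{lem:hilbert-trace} on each band with $\lambda=2^k$, use $\lambda^s\le 2^s2^{ks}$ on the band, and then sum using orthogonality; the only cosmetic difference is that the paper first sums over the finite truncation $Q_N=\sum_{|j|\le N}P_j$ and lets $N\to\infty$ by monotone convergence of the spectral measure, whereas you sum all bands at once via Tonelli, which is equally valid.
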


\begin{proof}
Let \(E\) be the spectral measure of \(A\), and write \(\mathbb Z\) for the
integers. For \(j\in\mathbb Z\), set
\[
 P_j=E((2^j,2^{j+1}]),
 \qquad U_j(y)=P_jU(y).
\]
Since \(P_j\) is bounded, \(U_j\) is locally absolutely continuous,
\(U_j'=P_jU'\) almost everywhere, and
\(U_j(y)\to P_jU(0)\) strongly as \(y\downarrow0\). On
\(\operatorname{Ran}P_j\), the range of the projection \(P_j\), the
spectral theorem gives the quadratic-form inequalities
\begin{equation}
 A^sP_j\leq2^{(j+1)s}P_j,
 \qquad
 2^jP_j\leq AP_j.
 \label{eq:band-inequalities}
\end{equation}
Thus, for example, the first inequality means
\(\|A^{s/2}h\|_H^2\leq2^{(j+1)s}\|h\|_H^2\) for
\(h\in\operatorname{Ran}P_j\); neither inequality is asserted on all of
\(H\).
Moreover,
\[
 \int_0^\infty y^{1-2s}
 \bigl(\|P_jU'(y)\|_H^2+2^j\|P_jU(y)\|_H^2\bigr)\,dy
 \leq\mathfrak E_s(U)<\infty,
\]
where the second term is controlled by \(2^jP_j\leq AP_j\).  Thus all
hypotheses of Lemma~\ref{lem:hilbert-trace} hold for \(U_j\).
Apply Lemma~\ref{lem:hilbert-trace} to \(U_j\) with
\(\lambda=2^j\). For \(N\geq1\), define the finite spectral truncation
\begin{equation}
 Q_N=E((2^{-N},2^{N+1}])=\sum_{j=-N}^{N}P_j.
 \label{eq:finite-spectral-truncation}
\end{equation}
The vector \(Q_NU(0)\) has bounded spectral support and therefore belongs to
\(\cD(A^{s/2})\). Orthogonality, the Hilbert-valued trace estimate, and
\eqref{eq:band-inequalities} yield
\begin{align*}
 \|A^{s/2}Q_NU(0)\|_H^2
 &=\sum_{j=-N}^{N}\|A^{s/2}P_jU(0)\|_H^2\\
 &\leq2^s\sum_{j=-N}^{N}2^{js}\|P_jU(0)\|_H^2\\
 &\leq C_s\int_0^\infty y^{1-2s}
       \sum_{j=-N}^{N}\|P_jU'(y)\|_H^2\,dy\\
 &\quad+C_s\int_0^\infty y^{1-2s}
       \sum_{j=-N}^{N}2^j\|P_jU(y)\|_H^2\,dy\\
 &\leq C_s\mathfrak E_s(U).
\end{align*}
The last line follows pointwise for almost every \(y\)
from orthogonality and the lower band estimate:
\begin{align*}
 \sum_{j=-N}^{N}\|P_jU'(y)\|_H^2
 &=\|Q_NU'(y)\|_H^2\leq\|U'(y)\|_H^2,\\
 \sum_{j=-N}^{N}2^j\|P_jU(y)\|_H^2
 &\leq\sum_{j=-N}^{N}\|A^{1/2}P_jU(y)\|_H^2\\
 &=\|A^{1/2}Q_NU(y)\|_H^2
 \leq\|A^{1/2}U(y)\|_H^2.
\end{align*}
The spectral projections commute with \(A^{1/2}\), and Tonelli's theorem
permits the finite sums to be integrated against \(y^{1-2s}\,dy\).
As \(N\to\infty\), the intervals
\((2^{-N},2^{N+1}]\) increase to \((0,\infty)\). Monotone convergence for
the scalar spectral measure of \(U(0)\) gives
\[
 \int_{(0,\infty)}\lambda^s\,d\mu_{U(0)}(\lambda)
 \leq C_s\mathfrak E_s(U).
\]
The zero spectral subspace contributes nothing.  The finiteness of this
integral proves \(U(0)\in\cD(A^{s/2})\), and its value is
\(\|A^{s/2}U(0)\|_H^2\).  This proves
\eqref{eq:spectral-trace}.
\end{proof}
\section{Weighted volume and radial cutoffs}
\label{sec:cutoffs}

We apply Lemma~\ref{lem:spectral-trace} with
\(H=L^2(M,\mu;\R)\) and \(A=-\Delta\). The auxiliary coordinate is denoted
by \(y\geq0\). Equip \(M\times(0,\infty)\) with the product metric
\(g+dy^2\), and define
\begin{equation}
 \rho(x,y)=\sqrt{d_g(o,x)^2+y^2},
 \qquad
 d\widetilde\mu(x,y)=y^{1-2s}\,d\mu(x)\,dy.
 \label{eq:product-objects}
\end{equation}
Thus \(\widetilde\mu\) is a weighted product measure.  Since
\(s\in(0,1)\), its density is locally integrable at \(y=0\).

For \(R>0\), let
\begin{equation}
 W(R)=\widetilde\mu\{(x,y)\in M\times(0,\infty):\rho(x,y)<R\}.
 \label{eq:weighted-volume-definition}
\end{equation}
If \(U\) has the Sobolev regularity used below, then
\eqref{eq:laplace-form-domain} identifies the extension energy with
\begin{equation}
 \mathfrak E_s(U)=
 \int_0^\infty\!\int_M y^{1-2s}
 \bigl(|\partial_yU(x,y)|^2+|\nabla_xU(x,y)|_g^2\bigr)
 \,d\mu(x)\,dy.
 \label{eq:geometric-extension-energy}
\end{equation}
Here \(\nabla_x\) differentiates only in the \(M\)-variable.

\begin{lemma}
\label{lem:weighted-volume}
For every \(R>0\),
\begin{equation}
 W(R)\leq\frac{R^{2-2s}}{2-2s}V(o,R).
 \label{eq:weighted-volume}
\end{equation}
\end{lemma}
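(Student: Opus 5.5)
The plan is to bound the weighted ball from above by a product of an \(M\)-ball and a \(y\)-interval, and then compute the one-dimensional factor exactly.

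\textbf{Step 1 (inclusion).} If \(\rho(x,y)<R\), then both \(d_g(o,x)\le\rho(x,y)<R\) and \(0<y\le\rho(x,y)<R\). Hence
\[
 \{(x,y):\rho(x,y)<R\}\subset B(o,R)\times(0,R).
\]

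\textbf{Step 2 (Tonelli).} The density \(y^{1-2s}\) is nonnegative and \(\widetilde\mu\) is a product measure, so Tonelli's theorem applies. Monotonicity of \(\widetilde\mu\) together with Step~1 gives
\[
 W(R)\le\widetilde\mu\bigl(B(o,R)\times(0,R)\bigr)=V(o,R)\int_0^R y^{1-2s}\,dy .
\]

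\textbf{Step 3 (the \(y\)-integral).} Since \(s\in(0,1)\), we have \(1-2s>-1\), so the integral converges at \(0\). Its value is
\[
 \int_0^R y^{1-2s}\,dy=\frac{R^{2-2s}}{2-2s}.
\]
Substituting this into Step~2 yields \eqref{eq:weighted-volume}.

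\textbf{Remarks on the steps.} Measurability of the set \(\{\rho<R\}\) holds because \(\rho\) is continuous, and \(V(o,R)<\infty\) by Hopf--Rinow. No step is a real obstacle; the only points to check are the local integrability at \(y=0\), which uses \(s<1\), and the inclusion in Step~1. One could sharpen the estimate by slicing: for fixed \(x\), the \(y\)-range is \(0<y<\sqrt{R^2-d_g(o,x)^2}\). The crude product bound already suffices for the claimed inequality.
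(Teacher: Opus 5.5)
Your proof is correct and is essentially the paper's argument. The paper slices in \(y\) via Fubini to write \(W(R)=\int_0^R y^{1-2s}V\bigl(o,\sqrt{R^2-y^2}\bigr)\,dy\) and then uses monotonicity of \(V(o,\cdot)\); this amounts to your inclusion \(\{\rho<R\}\subset B(o,R)\times(0,R)\) followed by Tonelli, and both routes arrive at the same bound \(V(o,R)\int_0^R y^{1-2s}\,dy\).
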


\begin{proof}
Fubini's theorem and the monotonicity of \(V(o,\cdot)\) give
\begin{align*}
 W(R)
 &=\int_0^Ry^{1-2s}
   V\!\left(o,\sqrt{R^2-y^2}\right)\,dy\\
 &\leq V(o,R)\int_0^Ry^{1-2s}\,dy
 =\frac{R^{2-2s}}{2-2s}V(o,R).
\end{align*}
\end{proof}

Fix \(R_0>0\), and let \(R_k=2^kR_0\) for \(k=0,1,\ldots\). Define
\begin{equation}
 a_k=\frac{R_k^{2s}}{V(o,R_k)},
 \qquad
 q_k=\frac{R_k^2}{W(R_{k+1})}.
 \label{eq:ak-qk}
\end{equation}
The sequence \((a_k)\) discretizes \eqref{eq:radial-condition}.  The
energy estimate below contains
\(W(R_{k+1})/R_k^2=q_k^{-1}\) on the annulus
\(R_k<\rho<R_{k+1}\).  Each \(q_k\) is finite and positive.

\begin{lemma}
\label{lem:series-divergence}
If \eqref{eq:radial-condition} holds, then
\[
 \sum_{k=0}^\infty a_k=\infty,
 \qquad
 \sum_{k=0}^\infty q_k=\infty.
\]
\end{lemma}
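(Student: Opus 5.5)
The plan is to discretize the integral in \eqref{eq:radial-condition} over the dyadic blocks \([R_k,R_{k+1}]\), using only the monotonicity of \(V(o,\cdot)\). Then Lemma~\ref{lem:weighted-volume} converts the divergence of \(\sum a_k\) into divergence of \(\sum q_k\). Since the integral diverges at infinity only, and \(V(o,r)>0\) for \(r>0\) with \(V(o,\cdot)\) nondecreasing, we may write
\[
 \int_{R_0}^\infty\frac{r^{2s-1}}{V(o,r)}\,dr
 =\sum_{k=0}^\infty\int_{R_k}^{R_{k+1}}\frac{r^{2s-1}}{V(o,r)}\,dr .
\]
The contribution from \([r_*,R_0]\), for any fixed lower limit \(r_*>0\), is finite because \(V(o,r)\geq V(o,r_*)>0\) there. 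Hence the hypothesis is equivalent to divergence of the tail \(\int_{R_0}^\infty\).

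\emph{Step 1: \(\sum a_k=\infty\).} On \([R_k,R_{k+1}]\) we have \(V(o,r)\geq V(o,R_k)\). Also \(r^{2s-1}\leq\max(1,2^{2s-1})R_k^{2s-1}\leq 2R_k^{2s-1}\), treating the cases \(2s\geq1\) and \(2s<1\) separately. The interval has length \(R_k\), so
\[
 \int_{R_k}^{R_{k+1}}\frac{r^{2s-1}}{V(o,r)}\,dr\leq\frac{2R_k^{2s}}{V(o,R_k)}=2a_k .
\]
Summing over \(k\) and using the divergence of the tail integral gives \(\sum a_k=\infty\).

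\emph{Step 2: \(\sum q_k=\infty\).} Lemma~\ref{lem:weighted-volume} gives \(W(R_{k+1})\leq R_{k+1}^{2-2s}V(o,R_{k+1})/(2-2s)\). Therefore
\[
 q_k\geq(2-2s)\frac{R_k^2}{2^{2-2s}R_k^{2-2s}V(o,R_{k+1})}
 =\frac{2-2s}{2^{2-2s}}\cdot\frac{R_k^{2s}}{V(o,R_{k+1})}
 =\frac{2-2s}{2^{2-2s}\,2^{2s}}\,a_{k+1}
 =\frac{2-2s}{4}\,a_{k+1},
\]
using \(R_k^{2s}=2^{-2s}R_{k+1}^{2s}\). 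Thus \(\sum_k q_k\geq\frac{2-2s}{4}\sum_{k\geq1}a_k=\infty\) by Step 1.

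There is no real obstacle here. The only care needed is the index shift in Step 2, since \(q_k\) involves \(W(R_{k+1})\) and hence \(V(o,R_{k+1})\), which is why \(a_{k+1}\) rather than \(a_k\) appears. The other minor point is the two-case bound on \(r^{2s-1}\) in Step 1, which depends on whether \(s\geq\frac12\) or \(s<\frac12\). Dropping the single term \(a_0\) does not affect divergence.
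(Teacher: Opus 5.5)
Your proposal is correct and follows the paper's proof: dyadic discretization using monotonicity of \(V(o,\cdot)\), then Lemma~\ref{lem:weighted-volume} to get \(q_k\geq\frac{1-s}{2}a_{k+1}\), which is your constant \(\frac{2-2s}{4}\). The differences are cosmetic. You bound \(r^{2s-1}\leq 2R_k^{2s-1}\) by cases where the paper integrates exactly to get \(\frac{2^{2s}-1}{2s}a_k\), and you justify the lower-limit reduction, which the paper leaves implicit.
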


\begin{proof}
Since \(V(o,r)\) is nondecreasing,
\begin{align*}
 \int_{R_k}^{R_{k+1}}
 \frac{r^{2s-1}}{V(o,r)}\,dr
 &\leq\frac1{V(o,R_k)}
 \int_{R_k}^{2R_k}r^{2s-1}\,dr\\
 &=\frac{2^{2s}-1}{2s}\,a_k.
\end{align*}
Thus convergence of \(\sum_k a_k\) would imply convergence of the tail
integral, proving the first assertion by contraposition. Moreover,
Lemma~\ref{lem:weighted-volume} gives the exact comparison
\begin{equation}
 q_k\geq\frac{1-s}{2}\,
 \frac{R_{k+1}^{2s}}{V(o,R_{k+1})}
 =\frac{1-s}{2}\,a_{k+1}.
 \label{eq:qk-ak-comparison}
\end{equation}
Hence \(\sum_kq_k\) also diverges.
\end{proof}

For integers \(0\leq m<n\), set
\begin{equation}
 S_{m,n}=\sum_{k=m}^nq_k,
 \qquad
 \delta_k=\frac{q_k}{S_{m,n}}.
 \label{eq:drop-definition}
\end{equation}
The numbers \(\delta_k\) are positive and sum to \(1\). Let
\(\eta_{m,n}:[0,\infty)\to[0,1]\) be the continuous piecewise affine
function that equals \(1\) on \([0,R_m]\), decreases by \(\delta_k\) on
\([R_k,R_{k+1}]\), and equals \(0\) on
\([R_{n+1},\infty)\). Since \(R_{k+1}-R_k=R_k\), its slope on the
\(k\)-th interval is \(-\delta_k/R_k\). Define
\begin{equation}
 U_{m,n}(x,y)=\eta_{m,n}(\rho(x,y)).
 \label{eq:cutoff}
\end{equation}

Among all real numbers \(\varepsilon_k\) satisfying
\(\sum_{k=m}^n\varepsilon_k=1\), weighted Cauchy--Schwarz gives
\[
 \sum_{k=m}^n\frac{\varepsilon_k^2}{q_k}
 \geq\frac1{S_{m,n}},
\]
with equality for \(\varepsilon_k=\delta_k\). Thus the chosen drops minimize
the quadratic upper bound used below.

\begin{lemma}
\label{lem:cutoff-energy}
The function \(U_{m,n}\) satisfies all hypotheses of
Lemma~\ref{lem:spectral-trace} and
\begin{equation}
 \mathfrak E_s(U_{m,n})\leq\frac1{S_{m,n}}.
 \label{eq:cutoff-energy}
\end{equation}
\end{lemma}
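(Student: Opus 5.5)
We need to verify the hypotheses of Lemma~\ref{lem:spectral-trace} for the path \(y\mapsto U_{m,n}(\cdot,y)\) and then bound the energy annulus by annulus. The plan is to first record pointwise facts about \(\rho\), then check regularity, and finally estimate the energy.

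\textbf{Pointwise facts.} The function \(x\mapsto d_g(o,x)\) is \(1\)-Lipschitz, so \(\rho\) is \(1\)-Lipschitz for the product metric. At almost every point, \(|\nabla_x\rho|^2+|\partial_y\rho|^2\leq1\). Since \(\eta_{m,n}\) is Lipschitz and piecewise affine, the chain rule for Lipschitz compositions gives, at almost every \((x,y)\) with \(R_k<\rho(x,y)<R_{k+1}\),
\[
 |\partial_yU_{m,n}|^2+|\nabla_xU_{m,n}|_g^2\leq\frac{\delta_k^2}{R_k^2}.
\]
Both derivatives vanish where \(\rho<R_m\) or \(\rho>R_{n+1}\). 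The level sets \(\{\rho=R_k\}\) are \(\widetilde\mu\)-null, and in any case the Lipschitz chain rule gives \(0\) a.e. on the flat pieces.

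\textbf{Regularity.} For each fixed \(y\), the function \(U_{m,n}(\cdot,y)\) is Lipschitz with support in \(\overline{B(o,R_{n+1})}\), which is compact. Hence \(U(y)\in W^{1,2}(M)=\cD(A^{1/2})\), with \(\|A^{1/2}U(y)\|_2^2=\int|\nabla_xU|^2\,d\mu\), by \eqref{eq:laplace-form-domain}.

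In \(y\), we have the pointwise bound \(|U(x,y)-U(x,y')|\leq|y-y'|\max_k(\delta_k/R_k)\), supported in a ball of finite volume. So \(y\mapsto U(y)\in L^2\) is Lipschitz, hence locally absolutely continuous; since a Hilbert space has the Radon--Nikodym property, it is a.e. differentiable with Bochner derivative. The plan for identifying that derivative with the pointwise \(\partial_yU(\cdot,y)\) is to use difference quotients, which converge pointwise a.e. and are dominated by a constant times the indicator of the ball, so dominated convergence applies.

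The strong limit \(U(0)=\eta_{m,n}(d_g(o,\cdot))\) follows from the same Lipschitz bound. Strong measurability of \(y\mapsto A^{1/2}U(y)\) is the one delicate point; here \(A^{1/2}U(y)\) is only determined through its norm \(\|A^{1/2}U(y)\|_2=\|\nabla_xU(\cdot,y)\|_{L^2}\). I would first try weak continuity: \(y\mapsto U(y)\) is continuous in \(L^2\), and \(\|A^{1/2}U(y)\|\) is uniformly bounded. Together these give weak continuity of \(A^{1/2}U(y)\), by testing against \(A^{1/2}\phi\) for \(\phi\in\cD(A^{1/2})\), which is dense in the range closure, and noting that \(A^{1/2}U(y)\) lies in that closure. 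Pettis' theorem in the separable space \(L^2(M)\) then gives strong measurability.

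\textbf{Energy estimate.} By \eqref{eq:geometric-extension-energy} and the pointwise bound,
\[
 \mathfrak E_s(U_{m,n})\leq\sum_{k=m}^n\frac{\delta_k^2}{R_k^2}\,\widetilde\mu\{R_k<\rho<R_{k+1}\}\leq\sum_{k=m}^n\frac{\delta_k^2}{R_k^2}W(R_{k+1})=\sum_{k=m}^n\frac{\delta_k^2}{q_k}.
\]
Substituting \(\delta_k=q_k/S_{m,n}\) gives \(\sum_k q_k/S_{m,n}^2=1/S_{m,n}\), which is \eqref{eq:cutoff-energy}. This also confirms that the extension energy is finite.

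The main obstacle is the measurability of \(y\mapsto A^{1/2}U(y)\) and the identification of the Bochner derivative with the pointwise one. The energy computation itself is routine.
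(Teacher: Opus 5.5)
Your proposal is correct and follows the paper's proof almost step for step. The matching steps are:
\begin{itemize}
\item compact support in \(\overline{B(o,R_{n+1})}\times[0,R_{n+1}]\);
\item Lipschitz slices lying in \(W^{1,2}(M)\);
\item the \(L^2\)-Lipschitz bound in \(y\), which gives the strong trace;
\item identification of the Bochner derivative with \(\partial_yU_{m,n}(\cdot,y)\) by dominated convergence, with majorant \(L\mathbf 1_{B(o,R_{n+1})}\);
\item the annulus-by-annulus sum \(\sum_k\delta_k^2/q_k=1/S_{m,n}\).
\end{itemize}
Your appeal to the Radon--Nikodym property of Hilbert spaces is a legitimate shortcut for the absolute-continuity step. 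The paper instead exhibits the a.e.\ strong derivative directly.

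The real difference is the strong measurability of \(y\mapsto A^{1/2}U_{m,n}(\cdot,y)\).
\begin{itemize}
\item The paper applies the bounded operators \(A^{1/2}E([0,N])\) to the continuous slice map and lets \(N\to\infty\) pointwise. This uses only the spectral theorem.
\item You prove weak continuity and invoke Pettis' theorem, which relies on separability of \(L^2(M,\mu)\) (true, since \(M\) is second countable). This gives a slightly stronger regularity statement at the cost of one more theorem.
\end{itemize}

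As written, your test class needs a small repair. The pairing \(\langle A^{1/2}U_{m,n}(\cdot,y),A^{1/2}\phi\rangle_2\) with only \(\phi\in\cD(A^{1/2})\) cannot be moved onto \(U_{m,n}(\cdot,y)\). That needs \(\phi\in\cD(A)\), or an extra weak-compactness argument in the form space.

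The clean version tests against \(\psi\in\cD(A^{1/2})\), which is dense in all of \(L^2\), so the range-closure detour is unnecessary:
\begin{itemize}
\item \(\langle A^{1/2}U_{m,n}(\cdot,y),\psi\rangle_2=\langle U_{m,n}(\cdot,y),A^{1/2}\psi\rangle_2\) is continuous in \(y\).
\item The uniform bound \(\|A^{1/2}U_{m,n}(\cdot,y)\|_2^2\leq L^2V(o,R_{n+1})\) then extends this continuity to every test vector.
\end{itemize}
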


\begin{proof}
Write \(R=R_{n+1}\). Since \(\eta_{m,n}\) vanishes on \([R,\infty)\),
\begin{equation}
 \operatorname{supp}U_{m,n}
 \subset\{(x,y):\rho(x,y)\leq R\}
 \subset\overline{B(o,R)}\times[0,R].
 \label{eq:cutoff-support}
\end{equation}
The last set is compact by Hopf--Rinow.

The distance function \(d_g(o,\cdot)\) is \(1\)-Lipschitz. Consequently
\(\rho\) is locally Lipschitz on the product space and
\(|\nabla_{g+dy^2}\rho|\leq1\) almost everywhere.  More explicitly, put
\(r(x)=d_g(o,x)\).  At every point where \(r\) is differentiable and
\(\rho>0\),
\[
 \nabla_x\rho=\frac{r}{\rho}\nabla r,
 \qquad
 \partial_y\rho=\frac{y}{\rho},
 \qquad
 |\nabla_x\rho|_g^2+|\partial_y\rho|^2=1,
\]
because \(|\nabla r|_g=1\) almost everywhere away from \(o\).  The global
Lipschitz bound and Rademacher's theorem give
\(|\nabla_{g+dy^2}\rho|\leq1\) almost everywhere.
Let \(L\) be the Lipschitz constant of \(\eta_{m,n}\).  For each
\(y\geq0\), the slice
\(U_{m,n}(\cdot,y)\) is a compactly supported Lipschitz function on \(M\),
so it lies in \(W^{1,2}(M)=\cD(A^{1/2})\). Moreover,
\[
 |\rho(x,y_1)-\rho(x,y_2)|\leq|y_1-y_2|
\]
and both slices vanish outside \(B(o,R)\). Hence
\begin{equation}
 \|U_{m,n}(\cdot,y_1)-U_{m,n}(\cdot,y_2)\|_2
 \leq L V(o,R)^{1/2}|y_1-y_2|.
 \label{eq:l2-slice-lipschitz}
\end{equation}
Thus \(y\mapsto U_{m,n}(\cdot,y)\) is an \(L^2(M)\)-valued Lipschitz map
and has the strong trace
\[
 U_{m,n}(\cdot,0)=\eta_{m,n}(d_g(o,\cdot)).
\]

The Lipschitz chain rule applies to
\(U_{m,n}=\eta_{m,n}\circ\rho\) almost everywhere.  For each fixed \(x\),
the equation \(\rho(x,y)=R_k\) has at most one solution \(y>0\).  A singleton
has zero measure for \(y^{1-2s}\,dy\), so Fubini's theorem shows that the
finitely many break sets have zero \(\widetilde\mu\)-measure.  On the annulus
\(R_k<\rho<R_{k+1}\),
\begin{equation}
 |\nabla_xU_{m,n}|_g^2+|\partial_yU_{m,n}|^2
 \leq\frac{\delta_k^2}{R_k^2}
 \quad\text{a.e.}
 \label{eq:annular-gradient}
\end{equation}

The pointwise derivative \(\partial_yU_{m,n}(x,y)\) exists for almost every
\((x,y)\).  By Fubini's theorem, for almost every fixed \(y>0\) it exists for
almost every \(x\).  For such a \(y\), the difference quotients satisfy
\[
 \frac{U_{m,n}(x,y+h)-U_{m,n}(x,y)}{h}
 \longrightarrow \partial_yU_{m,n}(x,y)
 \quad\text{for almost every }x
\]
as \(h\to0\), and their absolute values are bounded by
\(L\mathbf1_{B(o,R)}(x)\).  This dominating function belongs to
\(L^2(M,\mu)\).  Dominated convergence therefore gives convergence of the
difference quotients in \(L^2(M,\mu)\).  Hence the strong derivative of the
Hilbert-valued slice map exists almost everywhere and equals
\(\partial_yU_{m,n}(\cdot,y)\).  Taking difference quotients along a fixed
sequence \(h\to0\) also shows that this derivative, defined as zero on the
exceptional set, is strongly measurable.  Thus the slice map is locally
absolutely continuous in the Bochner sense with the asserted derivative.

For \(N>0\), the operator
\(A^{1/2}E([0,N])\) is bounded. Applying it to the strongly measurable slice
map gives a strongly measurable function of \(y\). For almost every \(y\),
these spectral truncations converge in \(L^2(M)\) to
\(A^{1/2}U_{m,n}(\cdot,y)\). Hence the latter map is strongly measurable.
The compact support in \eqref{eq:cutoff-support}, the local integrability of
\(y^{1-2s}\), and \eqref{eq:annular-gradient} make the extension energy
finite. All hypotheses of Lemma~\ref{lem:spectral-trace} are now verified.

The annuli are disjoint up to their boundary sets, which have zero
\(\widetilde\mu\)-measure.  Using \eqref{eq:geometric-extension-energy} and
summing the annular bounds,
we obtain
\begin{align*}
 \mathfrak E_s(U_{m,n})
 &\leq\sum_{k=m}^n\frac{\delta_k^2}{R_k^2}W(R_{k+1})
 =\sum_{k=m}^n\frac{\delta_k^2}{q_k}\\
 &=\frac1{S_{m,n}^2}\sum_{k=m}^nq_k
 =\frac1{S_{m,n}}.
\end{align*}
\end{proof}
\section{Proof of recurrence and consequences}
\label{sec:recurrence}

Assume \eqref{eq:radial-condition}. By
Lemma~\ref{lem:series-divergence}, every tail of \(\sum_kq_k\) diverges.
Choose integers \(m_j\to\infty\) and \(n_j>m_j\) such that
\[
 S_{m_j,n_j}\geq j.
\]
Let
\begin{equation}
 U_j=U_{m_j,n_j},
 \qquad
 f_j(x)=U_j(x,0)=\eta_{m_j,n_j}(d_g(o,x)).
 \label{eq:recurrence-cutoffs}
\end{equation}
The functions \(f_j\) satisfy
\[
 0\leq f_j\leq1,
 \qquad
 f_j\in C_c(M)\cap W^{1,2}(M),
 \qquad
 f_j=1\quad\text{on }B(o,R_{m_j}).
\]
For every fixed \(x\in M\), the last equality holds for all sufficiently
large \(j\), and hence \(f_j(x)\to1\).

Lemmas~\ref{lem:spectral-trace} and \ref{lem:cutoff-energy} give both
\(f_j\in\cD(A^{s/2})\) and
\begin{equation}
 \cE_s(f_j,f_j)=\|A^{s/2}f_j\|_2^2
 \leq C_s\mathfrak E_s(U_j)
 \leq\frac{C_s}{j}\longrightarrow0.
 \label{eq:energy-to-zero}
\end{equation}
The sequence \((f_j)\) therefore satisfies every condition in
\eqref{eq:recurrence-criterion}. The fractional form is recurrent, which
proves Theorem~\ref{thm:main}.

The theorem also yields the following volume-growth tests.

\begin{corollary}
\label{cor:polynomial-logarithmic}
Under the hypotheses of Theorem~\ref{thm:main}, the \(\alpha\)-process is
recurrent if either of the following bounds holds for all sufficiently large
\(r\):
\begin{enumerate}
 \item \(V(o,r)\leq Cr^d\) for constants \(C>0\) and \(d\leq\alpha\);
 \item \(V(o,r)\leq Cr^\alpha(\log r)^p\) for constants \(C>0\) and
 \(p\leq1\).
\end{enumerate}
\end{corollary}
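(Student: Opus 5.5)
The corollary follows directly from Theorem~\ref{thm:main}. It is enough to check, in each case, that the volume bound forces the divergence in \eqref{eq:original-condition}, or equivalently in \eqref{eq:radial-condition}. The hypotheses of the theorem are all in force, and only the tail of the integral matters. So we may fix \(r_0>1\) such that the stated bound holds for all \(r\geq r_0\), and estimate \(\int_{r_0}^\infty r^{\alpha-1}V(o,r)^{-1}\,dr\) from below.

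In case (1) we have \(V(o,r)\leq Cr^d\) with \(d\leq\alpha\). This gives
\[
 \frac{r^{\alpha-1}}{V(o,r)}\geq C^{-1}r^{\alpha-1-d}\geq C^{-1}r^{-1},
 \qquad r\geq r_0\geq1,
\]
and \(\int_{r_0}^\infty r^{-1}\,dr=\infty\). In case (2) we have \(V(o,r)\leq Cr^\alpha(\log r)^p\) with \(p\leq1\). Taking \(r_0>e\), so that \(\log r\geq1\) and hence \((\log r)^p\leq\log r\), we get
\[
 \frac{r^{\alpha-1}}{V(o,r)}\geq\frac{C^{-1}}{r(\log r)^p}
 \geq\frac{C^{-1}}{r\log r}.
\]
The substitution \(u=\log r\) turns the integral of the right side into \(\int^\infty du/u=\infty\).

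In both cases \eqref{eq:radial-condition} holds. By \eqref{eq:change-of-variables} this is the hypothesis of Theorem~\ref{thm:main}, and recurrence follows. The only point needing care is that the bounds are assumed only for large \(r\). This is harmless because divergence of \eqref{eq:radial-condition} depends only on the tail of the integral. On bounded ranges the integrand is finite, since \(V(o,r)>0\) for \(r>0\) (balls are open and nonempty) and \(V(o,r)<\infty\) by Hopf--Rinow. There is no real obstacle. The one thing to verify is the borderline case \(p=1\), where the estimate \(\int^\infty dr/(r\log r)=\infty\) is exactly what is needed.
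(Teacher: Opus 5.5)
Your proposal is correct and follows the paper's own proof. In each case you bound the integrand of \eqref{eq:radial-condition} below by a multiple of \(r^{\alpha-d-1}\) (respectively \(r^{-1}(\log r)^{-p}\)), note that the tail integral diverges, and invoke Theorem~\ref{thm:main}; your further reductions to \(r^{-1}\) and \((r\log r)^{-1}\), and the remark that only the tail matters, just make explicit what the paper leaves implicit.
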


\begin{proof}
In the first case, the integrand in \eqref{eq:radial-condition} is bounded
below by a positive constant times \(r^{\alpha-d-1}\), whose tail integral
diverges when \(d\leq\alpha\). In the second case, it is bounded below by a
positive constant times
\(r^{-1}(\log r)^{-p}\), whose tail integral diverges when \(p\leq1\).
Theorem~\ref{thm:main} applies.
\end{proof}

The polynomial assertion in Corollary~\ref{cor:polynomial-logarithmic}
recovers \cite[Theorem~16.2]{Grigoryan1999}.  The logarithmic assertion with
\(0<p\leq1\) permits larger critical growth than the bound \(Cr^\alpha\).

For positive functions of a scale parameter, write \(X\asymp Y\) if their
ratio is bounded above and below by positive scale-independent constants.
On \(M=\R^d\), one has \(V(o,R)\asymp R^d\), and the weighted product volume
has order
\[
 W(R)\asymp R^{d+2-\alpha}.
\]
The recurrence threshold becomes \(d+2-\alpha\leq2\), or \(d\leq\alpha\).
Euclidean space satisfies a relative Faber--Krahn inequality, so
\cite[Theorem~16.3]{Grigoryan1999} shows that the isotropic symmetric
\(\alpha\)-stable process is recurrent exactly when \(d\leq\alpha\).
Consequently, the polynomial-volume exponent in the criterion is sharp.

\section*{Acknowledgements}
The author thanks Professor Bobo Hua for helpful discussions.

\providecommand{\bysame}{\leavevmode\hbox to3em{\hrulefill}\thinspace}
\providecommand{\MR}{\relax\ifhmode\unskip\space\fi MR }
\providecommand{\MRhref}[2]{%
  \href{http://www.ams.org/mathscinet-getitem?mr=#1}{#2}
}
\providecommand{\href}[2]{#2}

\end{document}